
\documentclass[letterpaper,10pt,conference,twoside]{IEEEconf}  

\IEEEoverridecommandlockouts                              



\usepackage{amsmath,amssymb,color,cite}
\usepackage{graphicx}
\usepackage{subfigure}

\usepackage{latexsym}
\usepackage{algorithm}
\usepackage{algpseudocode}
\usepackage{verbatim}
\usepackage{cite}
\usepackage[english]{babel}

\usepackage{amsmath,amsthm}


\usepackage{pgf}
\usepackage{pgfplots,tikz}
\usetikzlibrary{tikzmark}
\usepackage[font=footnotesize]{caption}
\usepackage{arydshln}

\usepackage{amsfonts}
\usepackage{amssymb}
\usepackage{amsbsy}
\usepackage{bm}

\renewcommand{\algorithmicrequire}{\textbf{Input: }}
\renewcommand{\algorithmicensure}{\textbf{Output: }}

\DeclareMathOperator{\im}{im}

\newcommand\oprocendsymbol{\hbox{$\bullet$}}
\newcommand\oprocend{\relax\ifmmode\else\unskip\hfill\fi\oprocendsymbol}

\let\leq\leqslant
\let\geq\geqslant
\let\emptyset\varnothing

\newcommand{\R}{\mathbb R}
\newcommand{\N}{\mathbb N}
\newcommand{\bT}{\mathbb T}

\newcommand{\calP}{\ensuremath{\mathcal{P}}}


\newtheorem{theorem}{Theorem}[section]

\newtheorem{lemma}[theorem]{Lemma}
\newtheorem{corollary}[theorem]{Corollary}

\theoremstyle{remark}
\newtheorem{remark}[theorem]{Remark}

\theoremstyle{definition}
\newtheorem{assumption}{Assumption}
\newtheorem{definition}[theorem]{Definition}

\newtheorem{problem}{Problem}

\makeatletter
\algnewcommand{\LineComment}[1]{\Statex \hfill \(\triangleright\) #1}
\makeatother

\allowdisplaybreaks

\title{Using data informativity for online stabilization of unknown
  switched linear systems\thanks{This work was partially supported by
    AFOSR Award FA9550-19-1-0235.}}

\author{Jaap Eising${}^*$ \quad Shenyu Liu${}^*$ \quad Sonia
	Mart\'{\i}nez \quad Jorge Cort\'{e}s\thanks{${}^*$ Both authors
		contributed equally. Jaap Eising, Sonia
		Mart\'{\i}nez and Jorge Cort\'{e}s are with the Department of
		Mechanical and Aerospace Engineering, University of California,
		San Diego, \texttt{\{jeising,soniamd,cortes\}@ucsd.edu}. Shenyu Liu is with the School of Automation, Beijing Institute of Technology, China, \texttt{shenyuliu@bit.edu.cn}.} }

\begin{document}

\maketitle

\begin{abstract}
This work studies data-driven switched controller design for discrete-time switched linear systems. Instead of having access to the full system dynamics, an initialization phase is performed, during which noiseless measurements of the state and the input are collected for each mode. Under certain conditions on these measurements, we develop a stabilizing switched controller for the switched system. To be precise, the controller switches between identifying the active mode of the system and applying a predetermined stabilizing feedback. We prove that if the system switches according to certain specifications, this controller stabilizes the closed-loop system. Simulations on a network example  illustrate our approach.
\end{abstract}

\section{Introduction}

A switched system is a dynamical system that consists of several
modes, or subsystems. A logical rule, called the switching signal,
governs the switching between these modes~\cite{DL:03}. Because such
systems have been shown to model many applications, the study of
switched systems has attracted a lot of research interest in the
previous decades. One of the main topics of interest is the
stabilization of switched systems~\cite{HL-PJA:09}.  Known controller
designs include system matrix-based
methods~\cite{DC-LG-YL-YW:05,YS:11switched}, methods based on common
Lyapunov functions~\cite{MH-FF-GD:13,DZ-LA-JD-QZ:18} or multiple
Lyapunov functions~\cite{MSB:98,JG-PC:06,PC-JG-AA:08}.  However, all
the aforementioned design approaches are model-based, that is, they
require knowledge of the precise system dynamics in order to stabilize
the switched system. In practice, this assumption is often quite
restrictive, since the systems can be too complex to model or
uncertainties make precise modeling impossible. The problem of
controlling uncertain switched systems is studied in
\cite{HL-PJA:07,LIA-US:13}, where some parameters of the system
dynamics are assumed to be unknown but within a given range.

Uncertainty on models can be reduced by employing measurements and
constructing data-driven controls. This has become an active area of
research, leading to various papers such
as~\cite{CDP-PT:20,HJVW-JE-HLT-MKC:20}, of special relevance to this
work. Data-driven controller design for switched systems is more
challenging and has only been recently considered; see for
instance~\cite{CZ-MG-JZ:19,AK:20arXiv}. The type of systems studied
here have no external inputs; and the only controlling element is the
switching signal. On the other hand, the works
\cite{TD-MS:18,ZW-GOB-RMJ:21arXiv} focus on finding feedback control
laws that stabilize a switched linear system under arbitrary and
unknown switching signals. In this case, the desired feedback control
needs to uniformly stabilize all the modes simultaneously. Therefore,
such a controller may not exist in general and the corresponding
algorithms will be necessarily restrictive. Another relevant work is
\cite{MR-CDP-PT:22}, which proposes an online data-driven
feedback control. In this way, a stabilizing feedback control is found
based on measurements of the currently active mode of the
system. Naturally, only switched systems for which the system switches
infrequently can be stabilized.


In this paper, we study a stabilizing control design problem for
switched linear systems. In particular, we consider a situation in
which we do not have access to a model of the separate modes, nor the
precise switching signal of the system. Clearly, in order to be able
to design such a controller, it is necessary for each of the modes to
be stabilizable separately. As such, to compensate for the fact that
the dynamics of the modes are unknown, we assume that an
\textit{initialization phase} is performed. In this phase,
measurements of the state and the input are collected on a finite time
interval for each of these modes. In this sense, we have
\textit{partial information} on the modes of the system. We will
employ the methods of the informativity framework (see
e.g. \cite{HJVW-JE-HLT-MKC:20}) to develop necessary and sufficient
conditions on these measurements that guarantee that each mode is
stabilizable with a given decay rate. In particular, we note that
these measurements are not necessarily informative enough to uniquely
identify a model for each mode.

After this initialization phase, we consider the problem of operating
the switched system online. To achieve a stable behavior, our proposed
controller alternates between two phases, which consist of a
\textit{mode detection phase} and a \textit{stabilization phase}. If
the controller detects a modal switch, the mode-detection phase
applies excitatory inputs to measure system outputs and identify the
active mode. Thus, a main problem we consider is the characterization
of necessary and sufficient conditions on the \textit{online}
measurements that uniquely determine the current mode of the
system. In particular, these conditions must be such that they are
guaranteed to hold after a bounded number of steps.  After determining
the active mode, our proposed controller applies a stabilizing
feedback corresponding to this mode in a \textit{stabilization
  phase}. The controller will switch back to a mode detection phase as
soon as the solution of the system does not converge fast enough
(which is quantified in terms of Lyapunov functions). This leads to
the final problem considered in this paper: Obtain conditions that
guarantee that the stabilization phase controllers compensate for the
potential destabilization that occurs in the mode-detection phase, so
that overall closed-loop system is stable.

The paper is structured as follows. First, necessary background
notions and the problem are introduced and formulated in
Section~\ref{sec:notion}. Then, the main elements of our data-driven
switched controller design are explained in detail in
Section~\ref{sec:design}, which is followed by the stability analysis
of the closed-loop system in Section~\ref{sec:stability}. Our main
results are then supported by a numerical example in
Section~\ref{sec:simulation} and finally Section~\ref{sec:conclusion}
concludes our paper. All proofs are omitted for reasons of space and
will appear elsewhere.

\section{Problem formulation}\label{sec:notion}

Consider\footnote{Throughout the paper, we use the following
  notation. We denote by $\N$ and $\R$ the sets of non-negative
  integer and real numbers, respectively. We let $\R^{n\times m}$
  denote the space of $n\times m$ real matrices. For any
  $M\in\R^{n\times m}$, $\Vert M\Vert$ denotes the standard
  2-norm. For $P\in\R^{n\times n}$, $P\succeq 0$ (resp. $P\succ 0$)
  denotes that $P$ is positive semi-definite (resp.  definite).}  a
switched linear system
\begin{equation}\label{def:sys}
  x(t+1)=A_{\sigma(t)}x(t)+B_{\sigma(t)}u(t) ,
\end{equation}
where $x\in\R^n$ is the state,
$\sigma:\N\mapsto\{1,2,\cdots,p\}=:\calP$ is the switching signal and
$u\in\R^m$ is the control.  The dimensions $n$, $m$ and the number of
modes $p$ are known. However, the dynamics of the modes are unknown;
that is, for each mode $i\in\calP$, the matrices
$A_i\in\R^{n\times n}$ and $B_i\in \R^{n\times m}$ are
unknown. Additionally, we assume that the switching signal $\sigma$ is
also unknown; that is, we do not know in advance when the switches
happen and once a switch happens, which of the modes is
active.

To offset this lack of knowledge, we have access to a finite set of
measurements of the system. Specifically, in an
\textit{initialization} phase, we assume measurements of the
individual modes are available. After this, once the system is
running, we have access to \textit{online} measurements of the
currently active mode.  We consider the situation in which we can not
necessarily identify the model of the separate modes based on the
initialization data, nor the currently active mode based on the online
measurements.  Given this setup, our goal is to solve the following
problem.

\begin{problem}[Online switched controller design]\label{problem}
  Given initialization and online measurements, design a control law
  $u:\N\mapsto\R^m$ such that the resulting interconnection
  \eqref{def:sys} is guaranteed to satisfy the following
  \emph{input-to-state stability} (ISS)-like property: there exist
  constants $c>0,\zeta\in(0,1)$ and $r\geq 0$ (depending on the input)
  such that
  \begin{equation}\label{ISS-estimate_0}
    |x(t)|\leq c \, \zeta^t|x(0)|+r ,
  \end{equation}
  for all initial states $x(0)\in\R^n$ and all time $t\in\N$.
\end{problem}

To solve Problem~\ref{problem}, we employ the following multi-pronged
approach.  Since unique models for each of the modes cannot
necessarily be determined, we employ the concept of \textit{data
  informativity} to formulate conditions under which the
initialization measurements are enough to guarantee the existence of a
stabilizing feedback controller for each of the modes. Based on this, our
switched controller operates in two phases. In the \textit{mode
  detection phase}, the controller selects inputs that allow us to
determine, within a bounded number of steps, the active mode (which,
in general, requires less measurements than fully identifying the
system). Once the mode is identified, the controller switches to the
\textit{stabilization phase}, where the controller found in the
initialization step corresponding to the mode is
applied. Section~\ref{sec:design} formally describes the controller
and Section~\ref{sec:stability} characterizes its properties.

\section{Switched controller design}\label{sec:design}
In this section, we describe the three components of the switched
controller design: the initialization step, the mode detection phase,
and the stabilization phase.
 
\subsection{Initialization step}\label{subsec:initialization}
We start by considering the problem of finding stabilizing controllers
from pre-collected measurements, for which we resort to the notion of
data informativity.  Given measurements of the state $x$ and input $u$
signals on the time interval $\{0,\ldots, T\}$, we define the
matrices:
\[
  X:=
  \begin{bmatrix}x(0) & \!\!\cdots\!\!  &
    x(T)\end{bmatrix}\!,\hspace{1em} U_- :=\begin{bmatrix} u(0) &
    \!\!\cdots\!\!  & u(T-1)\end{bmatrix}.
\]
In the following, we identify these matrices with the
measurements. For convenience, we also define
\[
  X_+ := \begin{bmatrix} x(1) & \!\!\cdots\!\! &x(T) \end{bmatrix}\!,
  \hspace{1em} X_- := \begin{bmatrix} x(0) & \!\!\cdots\!\!  &
    x(T-1) \end{bmatrix}.
\]
The set of consistent systems is given~by
\begin{equation*}
  \Sigma(U_-,X)\! := \big\{(A,B)
  :X_+ \! =\! AX_-\!+ \! BU_-\big\}\!\subseteq\R^{n\times n}\times\R^{n\times m}. 
\end{equation*}
We are interested in characterizing properties of the true system
based on the measurements. However, the set $\Sigma(U_-,X)$ might
contain more than one pair of system matrices. This means that, we can
only conclude that, for instance a feedback gain $K$ stabilizes the
true system if this gain stabilizes any system whose system matrices
are in $\Sigma(U_-,X)$. The following notion captures when the data is
sufficiently informative to do this with a uniform decay rate.

\begin{definition}[Informativity for uniform
  stabilization]\label{def:informativity for uniform stabilization}
  The data $(U_-,X)$ is \textit{informative for uniform stabilization
    by state feedback with decay rate} $\lambda\in(0,1)$ if there
  exist $K\in\R^{m\times n}$ and $P\in\R^{n\times n}, P\succ 0$ such
  that
  \begin{equation}\label{eqn:informativity for uniform stabilization}
    (A+BK)^\top P (A+BK)\preceq\lambda P \quad\forall (A,B)\in\Sigma(U_-,X).
  \end{equation}
\end{definition}

When $(U_-,X)$ satisfies this definition and the matrices $K$ and $P$
are known, one can apply the control $u=Kx$ to the system, choose the
function $x \mapsto V(x)=x^\top Px$ and conclude
\begin{align}\label{eq:decay-rate}
  V(x(t+1))
    & \leq \lambda x(t)^\top P x(t)=\lambda V(x(t)),
\end{align}
for all $t \in \N$. This means that, even though the exact dynamics of
the system are unknown, the feedback gain $K$ is stabilizing, with
Lyapunov certificate $V$ with decay rate $\lambda$. Nevertheless,
Definition~\ref{def:informativity for uniform stabilization} does not
provide a constructive way of finding $K$ and $P$
satisfying~\eqref{eqn:informativity for uniform stabilization}. The
next result formulates the problem of finding such $K$ and $P$ as a
\emph{linear matrix inequality} (LMI) problem and it also shows that
the feasibility of this inequality is a necessary and sufficient
condition for the data $(U_-,X)$ to be informative for uniform
stabilization with decay rate $\lambda$ by state feedback.

\begin{theorem}[Conditions for informativity for uniform
  stabilization]\label{thm:common_K}
  The data $(U_-,X)$ is informative for uniform stabilization by state
  feedback with decay rate $\lambda$ iff there exist
  $Q\in\R^{n\times n}, Q\succ 0$ and $L\in\R^{m\times n}$ such that
  \begin{equation}\label{eqn:common_K_LMI}
    \begin{bmatrix}
      \lambda Q& 0 & 0 &0\\0&0&0 &Q\\0&0&0&L\\0&Q&L^\top &Q
    \end{bmatrix}+\begin{bmatrix} X_+\\-X_-\\-U_-\\0
    \end{bmatrix}\begin{bmatrix}
      X_+\\-X_-\\-U_-\\0
    \end{bmatrix}^\top\succeq 0.
  \end{equation}
  Moreover, the matrices $K:=LQ^{-1}, P:=Q^{-1}$ satisfy
  \eqref{eqn:informativity for uniform stabilization}.
\end{theorem}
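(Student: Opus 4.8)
The plan is to pass between the reshuffled LMI~\eqref{eqn:common_K_LMI} and Definition~\ref{def:informativity for uniform stabilization} using a Schur complement and then a congruence that exploits the consistency relation $X_+=AX_-+BU_-$. Since $Q\succ0$, I would first take the Schur complement of~\eqref{eqn:common_K_LMI} with respect to its $(4,4)$-block; a direct computation shows that~\eqref{eqn:common_K_LMI} is equivalent to
\begin{equation}\label{eq:reducedLMI}
  \begin{bmatrix}\lambda Q & 0 & 0\\ 0 & -Q & -L^\top\\ 0 & -L & -LQ^{-1}L^\top\end{bmatrix}
  + \begin{bmatrix}X_+\\-X_-\\-U_-\end{bmatrix}\begin{bmatrix}X_+\\-X_-\\-U_-\end{bmatrix}^\top\succeq 0 .
\end{equation}

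For the ``if'' implication together with the ``Moreover'' claim, set $K:=LQ^{-1}$ and $P:=Q^{-1}$ and fix an arbitrary $(A,B)\in\Sigma(U_-,X)$. Since $\begin{bmatrix}I & A & B\end{bmatrix}\left[\begin{smallmatrix}X_+\\-X_-\\-U_-\end{smallmatrix}\right]=X_+-AX_--BU_-=0$, left- and right-multiplying~\eqref{eq:reducedLMI} by $\begin{bmatrix}I & A & B\end{bmatrix}$ and its transpose kills the data term and, after substituting $L=KQ$, leaves $\lambda Q-(A+BK)Q(A+BK)^\top\succeq 0$. Multiplying on both sides by $Q^{-1/2}$ rewrites this as $\lambda I\succeq CC^\top$ with $C:=Q^{-1/2}(A+BK)Q^{1/2}$, and since $CC^\top\preceq\lambda I$ iff $C^\top C\preceq\lambda I$, conjugating back gives $(A+BK)^\top Q^{-1}(A+BK)\preceq\lambda Q^{-1}$, i.e.~\eqref{eqn:informativity for uniform stabilization}. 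As $(A,B)$ was arbitrary, the data is informative and the displayed $K,P$ satisfy~\eqref{eqn:informativity for uniform stabilization}.

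For the ``only if'' implication, given a certificate $(K,P)$ I would take $Q:=P^{-1}$, $L:=KP^{-1}$ and verify~\eqref{eqn:common_K_LMI}. The key preliminary observation is that informativity already constrains $K$: if $EX_-+FU_-=0$, then $(A,B)$ and $(A+E,B+F)$ are both consistent, so $A+BK$ and $A+BK+(E+FK)$ both satisfy~\eqref{eqn:informativity for uniform stabilization} with the same $P$; scaling $(E,F)$ forces $E+FK=0$. Hence $\im\left[\begin{smallmatrix}I\\K\end{smallmatrix}\right]\subseteq\im\left[\begin{smallmatrix}X_-\\U_-\end{smallmatrix}\right]$, the closed loop $A_{\mathrm{cl}}:=A+BK$ is the same for all $(A,B)\in\Sigma(U_-,X)$, and $A_{\mathrm{cl}}^\top PA_{\mathrm{cl}}\preceq\lambda P$. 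Fixing some $(A_0,B_0)\in\Sigma(U_-,X)$ and writing $\left[\begin{smallmatrix}X_+\\-X_-\\-U_-\\0\end{smallmatrix}\right]=\left[\begin{smallmatrix}A_0&B_0\\-I&0\\0&-I\\0&0\end{smallmatrix}\right]\left[\begin{smallmatrix}X_-\\U_-\end{smallmatrix}\right]$, one congruence clears the $X_+$-row of~\eqref{eqn:common_K_LMI}, and a second one built from a matrix $R$ with $\left[\begin{smallmatrix}I\\K\end{smallmatrix}\right]=WR$, where $W:=\left[\begin{smallmatrix}X_-\\U_-\end{smallmatrix}\right]\left[\begin{smallmatrix}X_-\\U_-\end{smallmatrix}\right]^\top$ (such $R$ exists by the image inclusion), block-diagonalizes the matrix. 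One is then left with $W\succeq 0$ (automatic) together with $P\succeq R^\top WR+\tfrac1\lambda A_{\mathrm{cl}}^\top PA_{\mathrm{cl}}$, and it remains to exhibit a certificate satisfying this last bound; for that I would use that replacing $(K,P)$ by $(K,cP)$ for $c>0$ preserves informativity, and solve a Stein equation $\widetilde P-\tfrac1\lambda A_{\mathrm{cl}}^\top\widetilde PA_{\mathrm{cl}}=M$ with a suitably large $M\succ 0$ to create the needed slack.

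The ``if'' direction is essentially a one-line congruence; I expect the ``only if'' direction to be the main obstacle. Two things there look delicate: first, recognizing that informativity forces the image inclusion $\im\left[\begin{smallmatrix}I\\K\end{smallmatrix}\right]\subseteq\im\left[\begin{smallmatrix}X_-\\U_-\end{smallmatrix}\right]$ (it is what lets the LMI decouple); and second, the final reduction to the Stein-type inequality $P\succeq R^\top WR+\tfrac1\lambda A_{\mathrm{cl}}^\top PA_{\mathrm{cl}}$ together with the argument that a certificate with enough slack exists.
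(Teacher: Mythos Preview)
The paper does not actually prove this theorem: it states that it ``can be concluded using the methods of~\cite{HJVW-MKC-JE-HLT:22,HJW-MKC:21arXiv}'' and omits the argument. Those references establish such equivalences via a matrix S-lemma/Finsler framework, so your hands-on approach is genuinely different in spirit.

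Your ``if'' direction is correct and is exactly what any proof would do: the Schur complement you compute is right, and the congruence with $\begin{bmatrix}I&A&B\end{bmatrix}$ kills the data term because of consistency, leaving $\lambda Q\succeq (A+BK)Q(A+BK)^\top$; your symmetrization step to pass to $(A+BK)^\top Q^{-1}(A+BK)\preceq\lambda Q^{-1}$ is also fine. The ``Moreover'' claim follows immediately.

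For ``only if'', your image-inclusion argument $\im\left[\begin{smallmatrix}I\\K\end{smallmatrix}\right]\subseteq\im\left[\begin{smallmatrix}X_-\\U_-\end{smallmatrix}\right]$ and the resulting well-definedness of $A_{\mathrm{cl}}$ are correct and are indeed the structural key. The gap is in your closing step. Scaling $P\mapsto cP$ gives no leverage because~\eqref{eqn:informativity for uniform stabilization} is homogeneous in $P$; and the Stein equation $\widetilde P-\tfrac1\lambda A_{\mathrm{cl}}^\top\widetilde P A_{\mathrm{cl}}=M$ with $M\succ0$ is solvable only when $\rho(A_{\mathrm{cl}})<\sqrt{\lambda}$, whereas the non-strict Definition~\ref{def:informativity for uniform stabilization} only yields $\rho(A_{\mathrm{cl}})\leq\sqrt{\lambda}$. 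This is not a technicality you can patch while keeping $K$ fixed: take $n=m=1$, true system $(A,B)=(\sqrt{\lambda},0)$, and data with $\left[\begin{smallmatrix}X_-\\U_-\end{smallmatrix}\right]$ full rank. Informativity holds (with equality, for any $K$), yet in your reduced $3\times3$ LMI the top-left $2\times2$ principal minor equals $-\lambda Q^2<0$ for every $Q>0$ and every $L$, so the LMI is infeasible with \emph{any} $K$. Thus the non-strict equivalence fails on this boundary, and your Stein argument cannot close there. The S-lemma route in the cited works handles exactly this by working with strict inequalities (or a Slater-type condition), which is what the present statement tacitly relies on; to make your direct argument go through you would likewise need $(A+BK)^\top P(A+BK)\prec\lambda P$ in Definition~\ref{def:informativity for uniform stabilization}, after which your Stein step becomes valid.
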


Theorem~\ref{thm:common_K} can be concluded using the methods of
from~\cite{HJVW-MKC-JE-HLT:22,HJW-MKC:21arXiv}.  With these notions in
place, we make the following assumption.

\begin{assumption}[Initialization phase]\label{ass:ass_on_data-I}
  Let $\lambda\in (0,1)$.  For each mode $i\in\calP$, the data
  $(U_-^i,X^i)$ is informative for uniform stabilization by state
  feedback with decay rate $\lambda$.
\end{assumption}

\subsection{Mode detection phase}\label{subsec:MD}

After the initialization step, the system is in operation and
additional \textit{online} measurements, denoted by
$(U^{\textnormal{on}}_-,X^{\textnormal{on}})$, are collected from the
system when it dwells in one mode.  We then face the problem of
determining which of the $p$ different systems has generated the
online measurements $(X^{\textnormal{on}},U_-^{\textnormal{on}})$. In
this section, we propose an algorithm to find the data-generating
mode. The notion of data compatibility plays a key role in achieving this
goal.
 
\begin{definition}[Compatibility of data]
  For $i \in \calP$, the data $(U_-^i,X^i)$ and
  $(U_-^{\textnormal{on}},X^{\textnormal{on}})$ are
  \textit{compatible} if there exists a system consistent with both,
  i.e.,
  $\Sigma(U^i_-,X^i)\cap\Sigma(U^{\textnormal{on}}_-,X^{\textnormal{on}})\neq\emptyset$.
\end{definition}

The following result characterizes this property.

\begin{lemma}[Conditions for data compatibility]\label{lem:exact dis}
  For $i \in \calP$, the data $(U^i_-,X^i)$ and
  $(U^{\textnormal{on}}_-,X^{\textnormal{on}})$ are compatible iff
  \begin{equation}\label{eq:kernel}
    \ker
    \begin{bmatrix}
      X^i_-&X^{\textnormal{on}}_-\\U^i_-&U^{\textnormal{on}}_-
    \end{bmatrix}
    \subseteq  
    \ker \begin{bmatrix}X^i_+&X^{\textnormal{on}}_+\end{bmatrix}. 
  \end{equation}
\end{lemma}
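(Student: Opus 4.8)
The plan is to turn the set‑nonemptiness condition that defines compatibility into the solvability of a single linear matrix equation, and then invoke the elementary fact characterizing when an equation $ZM=N$ admits a solution $Z$.

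First I would unfold the definitions. A pair $(A,B)$ belongs to $\Sigma(U^i_-,X^i)\cap\Sigma(U^{\textnormal{on}}_-,X^{\textnormal{on}})$ precisely when $X^i_+=AX^i_-+BU^i_-$ and $X^{\textnormal{on}}_+=AX^{\textnormal{on}}_-+BU^{\textnormal{on}}_-$ hold simultaneously. Stacking these two identities side by side, this is equivalent to
\[
  \begin{bmatrix} X^i_+ & X^{\textnormal{on}}_+ \end{bmatrix}
  = \begin{bmatrix} A & B \end{bmatrix}
    \begin{bmatrix} X^i_- & X^{\textnormal{on}}_- \\ U^i_- & U^{\textnormal{on}}_- \end{bmatrix}.
\]
Hence, writing $M:=\begin{bmatrix} X^i_- & X^{\textnormal{on}}_- \\ U^i_- & U^{\textnormal{on}}_- \end{bmatrix}$ and $N:=\begin{bmatrix} X^i_+ & X^{\textnormal{on}}_+ \end{bmatrix}$, the two datasets are compatible if and only if there is some $Z\in\R^{n\times(n+m)}$ (in the role of $\begin{bmatrix} A & B\end{bmatrix}$) with $N=ZM$.

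Next I would establish that $N=ZM$ is solvable in $Z$ if and only if $\ker M\subseteq\ker N$, where $\ker$ denotes the (right) null space; this inclusion is exactly~\eqref{eq:kernel}. The "only if" direction is a one‑liner: if $N=ZM$ and $Mv=0$, then $Nv=ZMv=0$. For the "if" direction I would use that the row space of any matrix equals the orthogonal complement of its right null space: from $\ker M\subseteq\ker N$ we get $(\ker N)^{\perp}\subseteq(\ker M)^{\perp}$, i.e.\ every row of $N$ is a linear combination of the rows of $M$; collecting the coefficient vectors of these combinations as the rows of $Z$ yields $N=ZM$. Combining this equivalence with the reformulation above proves the lemma.

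There is no genuine obstacle here; the only care needed is the bookkeeping between left and right null spaces (and the implicit transposes in the row‑space argument), together with the observation that compatibility is a symmetric notion, so it is harmless that the online block has been placed on the right. Alternatively, one could make the "if" direction constructive via the Moore–Penrose pseudoinverse by setting $Z:=N M^{\dagger}$ and verifying $ZM=N$ using $\ker M\subseteq\ker N$, but the row‑space argument is cleaner and self‑contained.
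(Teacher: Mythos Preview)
Your proposal is correct and follows essentially the same route as the paper: rewrite compatibility as the single linear equation $\begin{bmatrix}X^i_+&X^{\textnormal{on}}_+\end{bmatrix}=\begin{bmatrix}A&B\end{bmatrix}\begin{bmatrix}X^i_-&X^{\textnormal{on}}_-\\U^i_-&U^{\textnormal{on}}_-\end{bmatrix}$ and then invoke the standard equivalence between solvability of $N=ZM$ and $\ker M\subseteq\ker N$. The only difference is that you spell out the proof of this linear‑algebra fact (via the row‑space/orthogonal‑complement argument), whereas the paper simply asserts it.
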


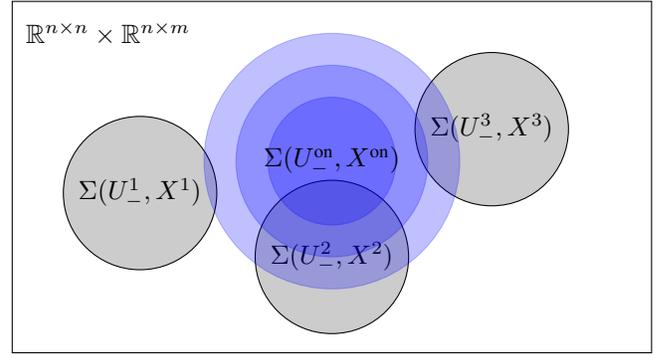
\begin{figure}
  \centering
  \begin{tikzpicture}[scale=0.85]
    \filldraw[color=black, fill=black, fill opacity=0.2] (2,3) circle (1.2) (2,3)  node [opacity=1] {$\Sigma(U^1_-,X^1)$}
    (5,2) circle (1.2) (5,2)  node [opacity=1] {$\Sigma(U^2_-,X^2)$}
    (7.5,4) circle (1.2) (7.5,4)  node [opacity=1] {$\Sigma(U^3_-,X^3)$};
    \filldraw[blue, opacity=0.25]  (5,3.5) circle (2);
    \filldraw[blue, opacity=0.25]  (5,3.5) circle (1.5);
    \filldraw[blue, opacity=0.25]  (5,3.5) circle (1) (5,3.5) node [black, opacity=1] {$\Sigma(U^{\textnormal{on}}_-,X^{\textnormal{on}})$};
    \draw (0,0.5) rectangle (10,6) (1.5,5.5) node {$\R^{n\times n}\times \R^{n\times m}$};
  \end{tikzpicture}
  \caption{Graphical interpretation of the mode detection
    scheme. Initially,
    $\Sigma(U^{\textnormal{on}}_-,X^{\textnormal{on}})$ intersects the
    sets corresponding to three different modes.  As more data is
    collected, $\Sigma(U^{\textnormal{on}}_-,X^{\textnormal{on}})$
    decreases in size (cf.  darker blue disks). When enough data is
    available, $\Sigma(U^{\textnormal{on}}_-,X^{\textnormal{on}})$
    eventually becomes compatible only with mode
    2.}\label{fig:mode_detection}
\end{figure}

For any mode detection mechanism to be successful, we require
that the initialization data are pairwise incompatible.

\begin{assumption}[Initialization phase
  --cont'd]\label{ass:ass_on_data-II}
  For each mode $i\in\calP$ the matrix pair $(A_i,B_i)$ is
  controllable. Furthermore, the data $\{(U_-^i,X^i)\}_{i\in\calP}$
  are such that $(U^i_-,X^i)$ and $(U^j_-,X^j)$ are incompatible for
  each pair $i\neq j \in\calP$.
\end{assumption}

Figure~\ref{fig:mode_detection} provides the intuition for the mode
detection scheme. Since the online measurements are generated by one
of the modes $i\in\calP$, there must exists at least one $i\in\calP$
such that $(U_-^{\textnormal{on}},X^{\textnormal{on}})$ and
$(U_-^i,X^i)$ are compatible. Since the initial data are pairwise
incompatible, when $\Sigma(U_-^{\textnormal{on}},X^{\textnormal{on}})$
is sufficiently ``small'', the mode $i$ giving $(U_-^i,X^i)$
compatible with $(U_-^{\textnormal{on}},X^{\textnormal{on}})$ must be
unique and hence the active mode is detected. This leads us to the
following definition.

\begin{definition}[Informativity for mode detection]
  The initialization data $\{(U_-^i,X^i)\}_{i\in\calP}$ and online
  data $(U_-^{\textnormal{on}},X^{\textnormal{on}})$ are
  \emph{informative for mode detection} if
  $(U_-^{\textnormal{on}},X^{\textnormal{on}})$ and $(U_-^i,X^i)$ are
  compatible for exactly one $i\in\calP$.
\end{definition}

As a result of these observations, we are interested in
\textit{generating} online data such that, after a bounded number of
steps, $\Sigma(U_-^{\textnormal{on}},X^{\textnormal{on}})$ becomes
small enough so that the data are informative for mode detection. More
precisely, given the initialization data, the problem is to find a
time horizon $T^{\textnormal{on}}$ and inputs
$u^{\textnormal{on}}(0),\ldots, u^{\textnormal{on}}(T^{\textnormal{on}}-1)$ such that
the corresponding online data
$(U_-^{\textnormal{on}},X^{\textnormal{on}})$ are informative for mode
identification.  To obtain such inputs, we adopt the experiment design
method of~\cite{HJVW:21}, which constructs inputs
$u^{\textnormal{on}}(0),\ldots, u^{\textnormal{on}}(n+m-1)$ such that
the corresponding $\Sigma(U_-^{\textnormal{on}},X^{\textnormal{on}})$
is a singleton set. Clearly, such online measurements would be
informative for mode identification. We apply these specific inputs to
obtain an upper bound to the number of steps required for the mode
detection phase. Note, however, that in general mode detection is achieved
 with less measurements than system
identification.

\begin{algorithm}[htb!]
  \caption{Mode detection algorithm per time instant}\label{alg:mode_detection}
  \algorithmicrequire $\calP,\calP_{\textrm{match}},\{U^i_-,X^i\}_{i\in\calP},U^{\textnormal{on}}_-,X^{\textnormal{on}},u_{\max}$\\
  \algorithmicensure $\calP_{\textrm{match}},U^{\textnormal{on}}_-,X^{\textnormal{on}}$
  \begin{algorithmic}[1]           
    \If{$U_-^{\textnormal{on}}\neq []$ and $x(t)\in\im X^{\textnormal{on}}-$} \Comment{Choose the next input}\label{step:mode_detection_start}
    	\State Pick $\begin{bmatrix}
    	\xi\\\eta
    	\end{bmatrix}\in\ker\begin{pmatrix}
    	X^{\textnormal{on}}_-\\U^{\textnormal{on}}_-
    	\end{pmatrix}^\top$ with $\eta\neq 0$\label{step:mode_detection_critical}
    	\State Let $u(t)\in\R^m$ be such that $|u(t)|\leq u_{\max}$ and $\xi^\top x(t)+\eta^\top u(t)\neq 0$
    	\Else  
    \State $u(t)\gets 0$
    \EndIf\label{step:mode_detection_end}
  \State Get the next state $x(t+1)$
  \State $U_-^{\textnormal{on}}\gets \begin{bmatrix}
      U_-^{\textnormal{on}}&u(t)
  \end{bmatrix}$                         \State $X^{\textnormal{on}}\gets \begin{bmatrix}
  X^{\textnormal{on}}& x(t+1)
\end{bmatrix}$ \Comment{Append the data}
  \For{$i\in\calP$}  
    \If{the inclusion \eqref{eq:kernel} is violated}      \LineComment{Data are incompatible}
    \State $\calP_{\textrm{match}}=\calP_{\textrm{match}}\backslash\{i\}$ \Comment{Eliminate mode $i$}
    \EndIf
    \EndFor 
  \end{algorithmic}
\end{algorithm}

Algorithm~\ref{alg:mode_detection} summarizes the mode detection
procedure. The strategy updates the online data
$(U_-^{\textnormal{on}},X^{\textnormal{on}})$ and a list
$\calP_{\textrm{match}}$ containing all the modes that are compatible
with the online data for each time instance. To bound the
destabilizing effect of the mode detection phase, the algorithm
modifies the experiment design method in~\cite{HJVW:21} (Steps 1 to
Steps 6) to add a parameter $u_{\max}>0$ bounding the magnitude of the
detection input.  The existence of $\eta\in\R^n$ satisfying the
conditions in Step~\ref{step:mode_detection_critical} is guaranteed
when each mode is controllable. As a consequence, it is
straightforward to check that the rank of
$\begin{pmatrix} X^{\textnormal{on}}_-\\U^{\textnormal{on}}_-
\end{pmatrix}$ increases with each step of the algorithm. As such,
after $n+m$ steps this data-matrix has full column rank, and hence
admits a unique compatible system.  As such:

\begin{corollary}\label{cor:at-most-n+m}
  Under Assumptions~\ref{ass:ass_on_data-I}
  and~\ref{ass:ass_on_data-II}, the online data
  $(U^{\textnormal{on}}_-,X^{\textnormal{on}})$ generated by
  Algorithm~\ref{alg:mode_detection} are informative for mode
  detection after at most $n+m$ time instances.
\end{corollary}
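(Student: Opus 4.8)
The plan is to combine the two ingredients assembled just before the statement. On the one hand, the experiment-design part of Algorithm~\ref{alg:mode_detection} forces, by controllability of each mode (Assumption~\ref{ass:ass_on_data-II}), the online data matrix $\begin{pmatrix}X^{\textnormal{on}}_-\\U^{\textnormal{on}}_-\end{pmatrix}$ to gain rank at each step, so after at most $n+m$ steps it has rank $n+m$ and the consistent set $\Sigma(U^{\textnormal{on}}_-,X^{\textnormal{on}})$ shrinks to a single pair. On the other hand, the pairwise incompatibility of the initialization data (also Assumption~\ref{ass:ass_on_data-II}) turns ``the online model is now determined'' into ``exactly one mode is compatible.'' With these in hand the argument is short.

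\emph{Step 1 (the online consistent set becomes a singleton).} I would record that each execution of lines~\ref{step:mode_detection_start}--\ref{step:mode_detection_end} appends to $\begin{pmatrix}X^{\textnormal{on}}_-\\U^{\textnormal{on}}_-\end{pmatrix}$ a column $\begin{pmatrix}x(t)\\u(t)\end{pmatrix}$ lying outside the span of the columns collected so far: in the branch where the detection input is applied this is exactly the effect of choosing $(\xi,\eta)$ orthogonal to that span with $\eta\neq 0$ (which exists by controllability, as recorded in the text preceding the statement) and then $u(t)$ with $|u(t)|\leq u_{\max}$ and $\xi^\top x(t)+\eta^\top u(t)\neq 0$; in the ``else'' branch it holds because there $x(t)\notin\im X^{\textnormal{on}}_-$. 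Hence the rank of $\begin{pmatrix}X^{\textnormal{on}}_-\\U^{\textnormal{on}}_-\end{pmatrix}$ strictly increases until it reaches $n+m$, which takes at most $n+m$ steps, and then, as recalled before the statement, the identity $X^{\textnormal{on}}_+=\begin{bmatrix}A&B\end{bmatrix}\begin{pmatrix}X^{\textnormal{on}}_-\\U^{\textnormal{on}}_-\end{pmatrix}$ has a unique solution, i.e.\ $\Sigma(U^{\textnormal{on}}_-,X^{\textnormal{on}})=\{(\bar A,\bar B)\}$ for a single pair $(\bar A,\bar B)$.

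\emph{Step 2 (the singleton is the active mode, and no other mode is compatible).} Let $i^\star\in\calP$ be the mode that actually generated the online samples (the system stays in one fixed mode during the detection phase). By the definition of $\Sigma$ we have $(A_{i^\star},B_{i^\star})\in\Sigma(U^{\textnormal{on}}_-,X^{\textnormal{on}})$, so $(\bar A,\bar B)=(A_{i^\star},B_{i^\star})$; and since $(A_{i^\star},B_{i^\star})\in\Sigma(U^{i^\star}_-,X^{i^\star})$ as well, mode $i^\star$ is compatible with the online data. If some $j\neq i^\star$ were also compatible, then $\Sigma(U^j_-,X^j)$ would meet $\Sigma(U^{\textnormal{on}}_-,X^{\textnormal{on}})=\{(A_{i^\star},B_{i^\star})\}$, giving $(A_{i^\star},B_{i^\star})\in\Sigma(U^j_-,X^j)\cap\Sigma(U^{i^\star}_-,X^{i^\star})$ and contradicting the pairwise incompatibility of Assumption~\ref{ass:ass_on_data-II}. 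Hence $i^\star$ is the unique compatible mode: the online data are informative for mode detection, and by Step~1 this happens within at most $n+m$ time instances.

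I do not expect a genuine obstacle here; the argument is definition-chasing plus a one-line contradiction. The point deserving the most care is the rank-increase claim underpinning the $n+m$ bound, where one must verify the ``newly appended column is linearly independent'' property in \emph{every} branch of Algorithm~\ref{alg:mode_detection} (including the very first step, when the online data is still empty). A secondary, modeling-level caveat is that the bound presumes the active mode stays constant throughout the detection phase; this is the standing assumption on how the online data is collected, and it is ensured a posteriori by the dwell-time analysis of Section~\ref{sec:stability}.
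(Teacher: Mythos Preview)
Your proposal is correct and follows essentially the same approach as the paper: the paper's justification, given in the paragraph immediately preceding the corollary, is precisely your Step~1 (controllability guarantees the needed $\eta$, the rank of the online data matrix grows at each step, and after $n+m$ steps the consistent set is a singleton), while your Step~2 spells out the one-line contradiction via pairwise incompatibility that the paper leaves implicit. Your added care about the ``else'' branch and the first step is warranted but does not depart from the paper's line of argument.
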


\subsection{Stabilization phase}\label{subsec:stabilization}

Given Assumption~\ref{ass:ass_on_data-I}, for each mode $i \in \calP$,
we can use the LMI~\eqref{LMI_for_MD} to find a feedback gain $K_i$
and a positive definite matrix $P_i$, such that $u=K_ix$ is a
stabilizing feedback law corresponding to the Lyapunov function given
by $V_i(x):=x^\top P_i x$.  Once the current mode has been identified,
this provides the control input for the stabilization phase.

Note, however, that the switching signal is unknown. This means that,
once the system has switched its mode, the current controller may no
longer be stabilizing the (new) closed-loop system and needs to be
updated. As such, we introduce a method of \textit{switch triggering}
to have the controller go back to the mode detection phase. For this,
we employ the decay rate guaranteed by the applied feedback gain
$K_i$. To be precise, recall from~\eqref{eq:decay-rate} that when the
controller applies the feedback corresponding with the active mode,
\begin{equation}\label{what_to_obey}
  V_i(x(t+1))\leq \lambda V_i(x(t)).
\end{equation}
This inequality provides a criterium to trigger switches: the
controller switches to the mode detection phase whenever the one-step
inequality \eqref{what_to_obey} is violated.

\begin{remark}[On switching to the mode detection
  phase]\label{rem:unsynchronized}
  Note that the system~\eqref{def:sys} may switch its mode while the
  inequality \eqref{what_to_obey} is preserved. In this case, our
  controller will not immediately change to the mode detection phase,
  causing the switching of the system and the switching of the
  controller to be unsynchronized. As we show later when analyzing the
  properties under the proposed controller, this does not affect
  stability, as the Lyapunov function $t\mapsto V_i(x(t))$ is still
  decreasing at the desired rate~$\lambda$.
  \oprocend
\end{remark}

Algorithm~\ref{alg:controller_1} summarizes the proposed
controller. After the initialization step, the algorithm starts in the
mode detection phase, indicated by the flag variable
$S_{\textrm{phase}}$. During the mode detection phase,
Algorithm~\ref{alg:mode_detection} is executed for each iteration
until the list of compatible modes contains only a single
element. After that, the controller's determination of the active mode
$\sigma_d$ is assigned with that remaining element and
$S_{\textrm{phase}}$ is toggled to $1$, indicating the switching into
stabilization phase.  During the stabilization phase. the control
$u(t)=K_{\sigma_d}x(t)$ is applied. Moreover,
when~\eqref{what_to_obey} is violated, the controller switches back to
mode detection phase by toggling $S_{\textrm{phase}}$ to $0$. In the
meantime, $X^{\textnormal{on}}, U_-^{\textnormal{on}}$ are reset to
record the new online data.

\begin{algorithm}[htb!]
  \caption{Data-driven switched feedback controller}\label{alg:controller_1}
  \algorithmicrequire $\calP,\{U^i_-,X^i,K_i,P_i\}_{i\in\calP},\lambda$
  \begin{algorithmic}[1]
  \State $\calP_{\textrm{match}}\gets\calP$
  \State $S_{\textrm{phase}}$ $\gets 0$ \Comment{Initialize to mode detection phase}
  \State $X^{\textnormal{on}}\gets x(0)$                 
  \State $U_-^{\textnormal{on}}\gets []$                 \Comment{Initialize online data}
  \While{the system \eqref{def:sys} is running}
  \If{$S_{\textrm{phase}}$ $=0$}                    \Comment{Mode detection phase} 
  \State Run Algorithm~\ref{alg:mode_detection}
  \State Update the variables $\calP_{\textrm{match}}, X^{\textnormal{on}},U^{\textnormal{on}}_-$    
  \If{$|\calP_{\textrm{match}}|=1$}
  \State $\sigma_d\in \calP$ \label{step:mode_to_switch}  \Comment{Set the mode of the controller}
  \State $\calP_{\textrm{match}}\gets\calP$
  \State $S_{\textrm{phase}}$ $\gets 1$                \Comment{Toggle phase}
  \EndIf    
  \Else   \Comment{Stabilization phase}
   \State Apply control $u(t)=K_{\sigma_d}x(t)$ 
\State Get the next state $x(t+1)$
 \If{$x(t+1)^\top P_{\sigma_d}x(t+1)> \lambda x(t)^\top  P_{\sigma_d} x(t)$}    \LineComment{Trigger for phase change}
 \State $X^{\textnormal{on}}\gets x(t)$                 
 \State $U_-^{\textnormal{on}}\gets []$                 \Comment{Reset online data}
 \State $S_{\textrm{phase}}$ $\gets 0$             
 \EndIf
    \EndIf
    \State $t\gets t+1$                     \Comment{Update the time}
    \EndWhile
  \end{algorithmic}
\end{algorithm}

We recall that ``exciting enough" inputs are used during the mode
detection phase, which may have negative effect on the stability of
the closed-loop system. Therefore, the alternation between mode
detection and stabilization in our control mechanism makes it
nontrivial to ensure the stability
guarantee~\eqref{ISS-estimate_0}. Establishing it is the subject of the
next section.  

\section{Stability analysis of the closed-loop
  system}\label{sec:stability}


In this section, we analyze the stability properties of the
closed-loop system and identify conditions so that it enjoys the
ISS-like property~\eqref{ISS-estimate_0}. Note that the switching
nature of the closed-loop system is due not only to the unknown
switching signal $\sigma$, but also to the control switches induced by
Algorithm~\ref{alg:controller_1}.  We make the following assumptions
regarding the system switching frequency.

\begin{assumption}[Assumptions on the switching
  signal]\label{ass:detection_after_exercution}
  Let $\bT^m:=\{t_1^m,t_2^m,\cdots\}$
  (resp. $\bT^s:=\{t_1^s,t_2^s,\cdots\}$) be the ordered sets
  consisting of the first time instants of each mode identification
  phase (resp.  stabilization phase). Then, the following holds:
  \begin{enumerate}
  \item\label{itm:asump 1} The system does not switch while the controller is in mode
    detection phase;
  \item\label{itm:asump adt} Let $N(t_a,t_b)$ be the total number of mode identification
    phases over the time interval $[t_a,t_b)$, that is,
    $ N(t_a,t_b):= |[t_a,t_b)\cap \bT^m|$, where $|\cdot|$ denotes
    cardinality.  There exists $\tau$ and $N_0\geq 1$ such that
    \begin{equation}\label{ADT_condition}
      N(t_a,t_b) \leq N_0+\frac{t_b-t_a}{\tau}\quad\forall t_a,t_b\in
      \N, t_a< t_b;
    \end{equation}
 
  \item\label{itm:asump aat} Let $M(t_a,t_b)$ be the total time spent in mode
    identification phases over the time interval $[t_a,t_b)$, that is,
    $M(t_a,t_b):=\sum_{t=t_a}^{t_b-1}{\mathbf 1}(t)$, where
    \[ \mathbf 1 (t):=\begin{cases} 1& \mbox{ if }t\in [t_i^m,t_i^s)
        \mbox{ for some }i\in\N,
        \\
	0&\mbox{ otherwise. }
      \end{cases}
    \] 
    There exists $\eta\in[0,1]$ and $T_0\geq 0$ such that
    \begin{equation}\label{AAT_condition}
      M(t_a,t_b)\leq T_0+\eta(t_b-t_a)\quad\forall t_a,t_b\in \N, t_a< t_b.
    \end{equation}
  \end{enumerate}
\end{assumption}

We next discuss the statements in
Assumption~\ref{ass:detection_after_exercution}.  Statement~\ref{itm:asump 1}
ensures that the mode detection phases and stabilization phases are
alternating; in other words, the elements in $\bT^m$ and $\bT^s$ are
ordered such that $t_1^m<t_1^s<t_2^m<t_2^s<\ldots.$ This implies that the time instances $t\in\{t_i^m,\cdots,t_i^s-1\}$
are part of the $i$-th mode detection phase and that the time
instances $t\in\{t_i^s,\cdots,t_{i+1}^m-1\}$ are part of the $i$-th
stabilization phase.  From Corollary~\ref{cor:at-most-n+m}, the time
length of each mode detection phase is bounded above by $m+n$. In
simulations, we have also observed that, in general, the mode
detection phase is much shorter than this upper bound. Consequently,
as long as the unknown switching signal does not switch too
frequently, statement~\ref{itm:asump 1} holds.

Statement~\ref{itm:asump adt} is an \emph{average dwell-time} (ADT) condition, see
e.g. \cite{JPH-AM:99}, on the mode detection phase. Informally
speaking, this statement implies that, on average, the controller is
switched to mode detection phase no more than once per $\tau$ time
instances. As such, with increasing $\tau$, the frequency of switches
decreases. Therefore, this assumption again holds if the
system~\eqref{def:sys} switches relatively infrequently.

Finally, statement~\ref{itm:asump aat} is an \emph{average activation-time} (AAT)
condition, see e.g. \cite{MM-DL:12}, on the mode detection phase.  If
the condition holds, the controller dwells in mode detection phase for
at most a fraction $\eta$ of the total time. This assumption holds if
the mode detection phase is, on average, relatively short when
compared to the stabilization phase.

Note that both the ADT and AAT conditions refer to the controller and
not to the unknown switching signal~$\sigma$. Nevertheless, they are
related, as the switching of the controller will be infrequent if the
system~\eqref{def:sys} switches slowly.  This means that statements
\ref{itm:asump adt} and \ref{itm:asump aat} in Assumption~\ref{ass:detection_after_exercution} can
be considered as indirect assumptions on the switching frequency of
the unknown signal~$\sigma$.

The following result characterizes the stability properties of the
closed-loop system.

\begin{theorem}[Stability guarantee for the closed-loop
  system]\label{thm:stability_control}
  Under Assumptions~\ref{ass:ass_on_data-I},~\ref{ass:ass_on_data-II},
  and~\ref{ass:detection_after_exercution}, consider the switched
  system \eqref{def:sys} under the data-driven switching feedback
  controller described by Algorithm~\ref{alg:controller_1}. Let
  $\lambda_u,k>0$ be such that
  \begin{equation}\label{LMI_for_MD}
    \begin{bmatrix}
      P_i^{-1}&A_i&B_i\\A_i^\top&\lambda_uP_i&0\\B_i^\top&0&kI
    \end{bmatrix}\succeq 0 , \quad\forall i\in\calP ,
  \end{equation}
  and define
  \begin{equation}\label{def:mu}
    \mu:=\max_{i,j\in\calP}\Vert P_iP_j^{-1}\Vert.
  \end{equation}
  If the following holds
  \begin{equation}\label{ADT_AAT_condition}
    \left(1-\frac{\ln\lambda_u}{\ln\lambda}\right)\eta +
    \left(1-\frac{\ln\mu}{\ln\lambda}\right)\frac{1}{\tau}<1,  
  \end{equation}
  then for all initial states $x(0)\in\R^n$ and each time $t\in\N$, the
  solution of the closed-loop system satisfies
  \begin{equation}\label{ISS-estimate}
    |x(t)| \leq \sqrt{\frac{\bar\lambda\lambda b}{\underline\lambda
        a}}a^{\frac{t}{2}}|x(0)| +
    u_{\max}\sqrt{\frac{bk}{\underline\lambda(1-a)}}  ,
  \end{equation}
  where
  \begin{subequations}\label{eq:a-b}
    \begin{align}
      a&:=\lambda\left(\frac{\mu}{\lambda}\right)^{\frac{1}{\tau}}
         \left(\frac{\lambda_u}{\lambda}\right)^{\eta}
         \in(\lambda,1),
         \label{def:a} 
      \\ 
      b&:=\left(\frac{\mu}{\lambda}\right)^{\frac{1}{\tau}+N_0}
         \left(\frac{\lambda_u}{\lambda}\right)^{\eta+T_0} , \label{def:b} 
    \end{align}
  \end{subequations}
  and $\bar\lambda,\underline\lambda$ are the maximum of the largest
  eigenvalues and the minimum of the smallest eigenvalues among
  $\{P_i\}_{i \in \calP}$ respectively.
\end{theorem}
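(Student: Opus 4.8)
The plan is to run a multiple‑Lyapunov‑function argument adapted to the two‑phase controller, using the per‑mode certificates $V_i(x)=x^\top P_ix$ as building blocks. Introduce $W(t):=V_{q(t)}(x(t))$, where $q(t)$ is the mode whose certificate is currently in force: on the $i$‑th mode detection phase and the stabilization phase that follows it, set $q(t)=\theta_i$, the mode declared in step~\ref{step:mode_to_switch} of Algorithm~\ref{alg:controller_1}. By Assumption~\ref{ass:ass_on_data-II} the initialization data are pairwise incompatible, so by Lemma~\ref{lem:exact dis} the elimination test~\eqref{eq:kernel} leaves exactly the true active mode, and by Assumption~\ref{ass:detection_after_exercution}\ref{itm:asump 1} that mode is constant throughout the $i$‑th mode detection phase; hence $q(t)=\theta_i$ agrees with $\sigma(t)$ on that phase. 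The goal is a bound of the form $W(t)\le b\,a^{t}W(0)+ku_{\max}^2 b/(1-a)$, which after passing through $\underline\lambda|x|^2\le W(x)\le\bar\lambda|x|^2$ (valid since $\underline\lambda I\preceq P_i\preceq\bar\lambda I$) and taking square roots yields~\eqref{ISS-estimate}.

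The core is three one‑step estimates. (a) \emph{Stabilization steps:} whenever $u(t)=K_{\theta_i}x(t)$ is applied and the trigger~\eqref{what_to_obey} does not fire, $W(t+1)\le\lambda W(t)$ by the very definition of the trigger; moreover, since $(A_{\theta_i},B_{\theta_i})\in\Sigma(U_-^{\theta_i},X^{\theta_i})$, Theorem~\ref{thm:common_K} makes this inequality automatic while $\sigma(t)=\theta_i$, so the trigger can fire only after $\sigma$ has switched away from $\theta_i$ (cf. Remark~\ref{rem:unsynchronized}). (b) \emph{Mode detection steps:} by Assumption~\ref{ass:detection_after_exercution}\ref{itm:asump 1} the system stays in mode $\theta_i$, the applied input satisfies $|u(t)|\le u_{\max}$, and a Schur complement of~\eqref{LMI_for_MD} gives $\begin{bmatrix}A_{\theta_i}&B_{\theta_i}\end{bmatrix}^\top P_{\theta_i}\begin{bmatrix}A_{\theta_i}&B_{\theta_i}\end{bmatrix}\preceq\bdiag(\lambda_uP_{\theta_i},kI)$, hence $W(t+1)\le\lambda_uW(t)+ku_{\max}^2$. (c) \emph{Handover steps:} each time $q$ changes value (only at the start of a mode detection phase) $W$ jumps by at most the factor $\mu$ of~\eqref{def:mu}, because $P_i\preceq\mu P_j$ for all $i,j$; the single step at which Algorithm~\ref{alg:controller_1} actually toggles the phase flag — the trigger step, still executed with the old feedback — must also be absorbed into this budget.

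Chaining these over $[0,t)$: there are $M(0,t)$ mode detection steps, $t-M(0,t)$ stabilization steps of which at most $N(0,t)$ are trigger/handover steps, and at most $N(0,t)+1$ certificate changes, yielding $W(t)\le\lambda^{\,t-M(0,t)-N(0,t)}\lambda_u^{\,M(0,t)}\mu^{\,N(0,t)+1}W(0)+(\text{forced term})$. Writing the homogeneous factor as $\lambda^{t}(\lambda_u/\lambda)^{M(0,t)}(\mu/\lambda)^{N(0,t)}\,\mu$ and inserting the AAT bound~\eqref{AAT_condition} and the ADT bound~\eqref{ADT_condition} (using $\lambda_u/\lambda\ge1$, $\mu/\lambda\ge1$) collapses the exponents to exactly $a$ from~\eqref{def:a} and $b$ from~\eqref{def:b}; the standing inequality~\eqref{ADT_AAT_condition} is precisely what forces $a<1$, and $a>\lambda$ is immediate. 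For the forced term, the $ku_{\max}^2$ injected at each mode detection step is thereafter attenuated by the product of the subsequent per‑step and handover factors, which the same counting bounds dominate by a geometric sequence of ratio $a$; summing gives $\le ku_{\max}^2\,b\,\sum_{s\ge0}a^{s}=ku_{\max}^2 b/(1-a)$, and the extra $\sqrt{\lambda}$ and $\sqrt{\bar\lambda/\underline\lambda}$ in~\eqref{ISS-estimate} come from the initial certificate change and the conversions. The main obstacle is item (c): making the accounting of the trigger step rigorous — showing that the last step executed under the stale feedback before the mode change is noticed, together with the switch of Lyapunov certificate, costs no more than the $\mu$ (and one $\lambda_u$) already budgeted — and aligning the index sets used for $M$, $N$ and the geometric sum so that the exponents reduce exactly to those appearing in $a$ and $b$.
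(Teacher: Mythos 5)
Your proposal is essentially correct and rests on the same three one-step estimates as the paper --- the $\lambda$-decay enforced by the trigger \eqref{what_to_obey} during stabilization, the bound $V_i(x(t+1))\le\lambda_u V_i(x(t))+k\,u_{\max}^2$ obtained from the Schur complement of \eqref{LMI_for_MD} during detection, and the factor $\mu$ at each change of certificate --- but it assembles them by a genuinely different route. You chain the per-step factors directly over $[0,t)$, invoke the counting bounds \eqref{ADT_condition} and \eqref{AAT_condition} to collapse the product $\lambda^{t-M}\lambda_u^{M}\mu^{N}$ into $b\,a^{t}$, and handle the input term by re-applying the same counting on every tail interval $[s,t)$ (legitimate, since the ADT/AAT conditions are stated for all $t_a<t_b$) and summing a geometric series. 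The paper instead first proves a timer lemma: Assumption~\ref{ass:detection_after_exercution} yields bounded timers $\tau_d\in[0,N_0]$ and $\tau_a\in[0,T_0]$ with prescribed one-step dynamics, and the weighted function $W(\xi)=(\mu/\lambda)^{\tau_d}(\lambda_u/\lambda)^{\tau_a}\,x^\top P_{\sigma_d}x$ then satisfies the \emph{uniform} recursion $W(\xi(t+1))\le a\,W(\xi(t))+b\,k u_{\max}^2$ in all three cases, so a single application of the comparison principle delivers both the homogeneous term and the forced term $\tfrac{b}{1-a}k u_{\max}^2$ at once. The timer route buys you freedom from the tail-interval bookkeeping and produces the exact constants of \eqref{ISS-estimate}; your route is more elementary and, if the exponents are tracked carefully, even marginally sharper in the rate (direct chaining gives $\mu^{1/\tau}$ where the paper's $a$ carries $(\mu/\lambda)^{1/\tau}$), but you still owe the verification that your geometric-sum constant does not exceed $b/(1-a)$.

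Regarding the obstacle you flag in item (c): this is not a gap in your plan. The paper charges the entire transition --- the change of certificate together with one step of $\lambda_u$-growth and the offset $r=ku_{\max}^2$ --- to the single case $t\in\bT^m$, obtaining $V(\xi(t+1))\le\mu\lambda_u V(\xi(t))+r$, which is precisely the ``$\mu$ plus one $\lambda_u$'' budget you propose. You are right, however, that this is the delicate point of the whole argument: the offset $r=ku_{\max}^2$ is only valid at steps where the applied input is the bounded detection input, so the case split must classify the handover step accordingly; the paper does this implicitly rather than explicitly.
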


Note that the LMIs in \eqref{LMI_for_MD} always hold by
picking $\lambda_u$ and $k$ sufficiently large. With $\lambda$ given
as in Assumption~\ref{ass:ass_on_data-I}, we know that the decay rate
during the stabilization phase is faster than $\lambda$, and
$\lambda_u$ gives an upper bound on the possible growth rate during
the mode detection phase. The parameter $\mu$ corresponds to the
destabilizing effect introduced by each switching. For a given
switched system, the constants $\mu$, $\lambda$, and $\lambda_u$ are
fixed. Hence the condition~\eqref{ADT_AAT_condition} is always
satisfied if $\tau$ is sufficiently large and $\eta$ is sufficiently
small. This means as long as the system switches sufficiently
infrequently and the mode detection phases are sufficiently short,
then~\eqref{ISS-estimate} holds.

\begin{figure*}[ht]
	\tikzset{every picture/.style={scale=.65}}
	\centering
	\subfigure[]{
%
%
\definecolor{mycolor1}{rgb}{0.00000,0.44700,0.74100}%
\definecolor{mycolor2}{rgb}{0.85000,0.32500,0.09800}%
\begin{tikzpicture}

\begin{axis}[%
width=4.521in,
height=3.566in/1.3,
at={(0.758in,0.481in)},
scale only axis,
xmin=0,
xmax=100,
ymin=-1,
ymax=6,
axis background/.style={fill=white},
legend style={at={(axis cs:99,1)},anchor=north east}
]

\addplot[const plot, color=mycolor1] table[row sep=crcr] {%
1	4\\
2	4\\
3	4\\
4	4\\
5	4\\
6	4\\
7	4\\
8	4\\
9	4\\
10	4\\
11	1\\
12	1\\
13	1\\
14	1\\
15	1\\
16	1\\
17	1\\
18	1\\
19	1\\
20	1\\
21	1\\
22	1\\
23	1\\
24	1\\
25	1\\
26	1\\
27	2\\
28	2\\
29	2\\
30	2\\
31	2\\
32	2\\
33	4\\
34	4\\
35	4\\
36	4\\
37	4\\
38	4\\
39	4\\
40	2\\
41	2\\
42	2\\
43	2\\
44	2\\
45	2\\
46	5\\
47	5\\
48	5\\
49	5\\
50	5\\
51	5\\
52	5\\
53	5\\
54	5\\
55	5\\
56	2\\
57	2\\
58	2\\
59	2\\
60	2\\
61	2\\
62	2\\
63	2\\
64	4\\
65	4\\
66	4\\
67	4\\
68	4\\
69	4\\
70	4\\
71	4\\
72	3\\
73	3\\
74	3\\
75	3\\
76	3\\
77	3\\
78	3\\
79	2\\
80	2\\
81	2\\
82	2\\
83	2\\
84	2\\
85	2\\
86	5\\
87	5\\
88	5\\
89	5\\
90	5\\
91	5\\
92	5\\
93	5\\
94	2\\
95	2\\
96	2\\
97	2\\
98	2\\
99	2\\
100	2\\
101	2\\
};
\addlegendentry{$\sigma(t)$}

\addplot[const plot, color=mycolor2] table[row sep=crcr] {%
1	1\\
2	1\\
3	1\\
4	4\\
5	4\\
6	4\\
7	4\\
8	4\\
9	4\\
10	4\\
11	4\\
12	4\\
13	4\\
14	1\\
15	1\\
16	1\\
17	1\\
18	1\\
19	1\\
20	1\\
21	1\\
22	1\\
23	1\\
24	1\\
25	1\\
26	1\\
27	1\\
28	1\\
29	1\\
30	2\\
31	2\\
32	2\\
33	2\\
34	2\\
35	2\\
36	4\\
37	4\\
38	4\\
39	4\\
40	4\\
41	4\\
42	4\\
43	2\\
44	2\\
45	2\\
46	2\\
47	2\\
48	2\\
49	5\\
50	5\\
51	5\\
52	5\\
53	5\\
54	5\\
55	5\\
56	5\\
57	5\\
58	5\\
59	2\\
60	2\\
61	2\\
62	2\\
63	2\\
64	2\\
65	2\\
66	2\\
67	4\\
68	4\\
69	4\\
70	4\\
71	4\\
72	4\\
73	4\\
74	4\\
75	3\\
76	3\\
77	3\\
78	3\\
79	3\\
80	3\\
81	3\\
82	2\\
83	2\\
84	2\\
85	2\\
86	2\\
87	2\\
88	2\\
89	5\\
90	5\\
91	5\\
92	5\\
93	5\\
94	5\\
95	5\\
96	5\\
97	2\\
98	2\\
99	2\\
100	2\\
};
\addlegendentry{$\sigma_d(t)$}

\end{axis}
\end{tikzpicture}
	\subfigure[]{
%
%
\definecolor{mycolor1}{rgb}{0.00000,0.44700,0.74100}%
\begin{tikzpicture}

\begin{axis}[%
width=4.521in,
height=3.566in/1.3,
at={(0.758in,0.481in)},
scale only axis,
xmin=0,
xmax=100,
ymin=-0.5,
ymax=1.5,
ytick={0,1},
yticklabels={S,MD},
axis background/.style={fill=white},
legend style={legend cell align=left, align=left, draw=white!15!black}
]
\addplot[const plot, color=mycolor1] table[row sep=crcr] {%
1	1\\
2	1\\
3	1\\
4	0\\
5	0\\
6	0\\
7	0\\
8	0\\
9	0\\
10	0\\
11	1\\
12	1\\
13	1\\
14	0\\
15	0\\
16	0\\
17	0\\
18	0\\
19	0\\
20	0\\
21	0\\
22	0\\
23	0\\
24	0\\
25	0\\
26	0\\
27	1\\
28	1\\
29	1\\
30	0\\
31	0\\
32	0\\
33	1\\
34	1\\
35	1\\
36	0\\
37	0\\
38	0\\
39	0\\
40	1\\
41	1\\
42	1\\
43	0\\
44	0\\
45	0\\
46	1\\
47	1\\
48	1\\
49	0\\
50	0\\
51	0\\
52	0\\
53	0\\
54	0\\
55	0\\
56	1\\
57	1\\
58	1\\
59	0\\
60	0\\
61	0\\
62	0\\
63	0\\
64	1\\
65	1\\
66	1\\
67	0\\
68	0\\
69	0\\
70	0\\
71	0\\
72	1\\
73	1\\
74	1\\
75	0\\
76	0\\
77	0\\
78	0\\
79	1\\
80	1\\
81	1\\
82	0\\
83	0\\
84	0\\
85	0\\
86	1\\
87	1\\
88	1\\
89	0\\
90	0\\
91	0\\
92	0\\
93	0\\
94	1\\
95	1\\
96	1\\
97	0\\
98	0\\
99	0\\
100	0\\
};

\end{axis}
\end{tikzpicture}
	\subfigure[]{
		\input{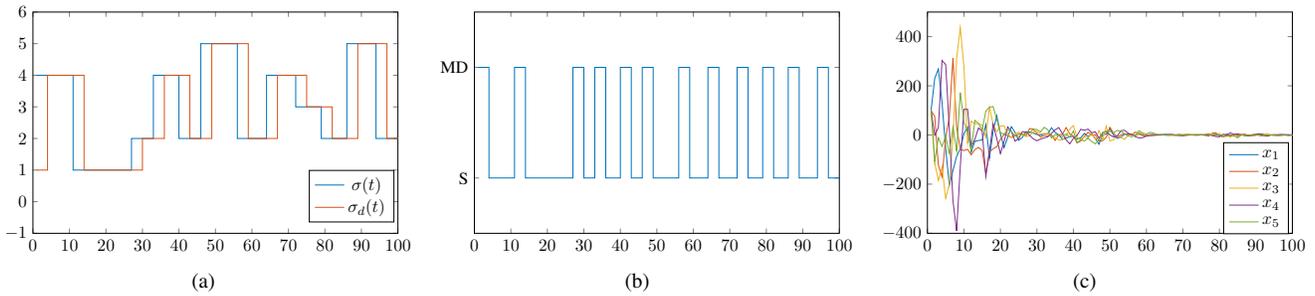}}
              \caption{The results of the example of
                Section~\ref{sec:simulation}. (a) shows the switching
                signals of the system ($\sigma(t)$) and of the
                controller ($\sigma_d(t)$), (b) shows the active phase
                of the controller, which is either mode detection or
                stabilization, and (c) shows the resulting state
                trajectories. }\label{fig:example}
\end{figure*}

\section{Simulation results}\label{sec:simulation}
We illustrate the performance of the proposed data-driven switching
feedback controller. We consider an unknown switched linear system
with parameters $n=5$, $m=3$, and $p=5$. In order to conserve space, 
we refrain from presenting the state and input matrices here. A data pair $(U^i_-,X^i)$
with $T=7$ is collected in the initialization phase for each mode
$i \in \calP = \{1,\dots,5\}$. We set $\lambda=0.8$ as the desired
decay rate for each mode. Both Assumptions~\ref{ass:ass_on_data-I} and
\ref{ass:ass_on_data-II} are satisfied on the initial data. While on
average, we let the system switch once per 8 units of time as shown by
the blue curve in Figure~\ref{fig:example}(a), the mode-to-go is set
to be random.  In the mode detection algorithm, we choose $u_{\max}=1$
as the bound on the input.  Figure~\ref{fig:example} shows the result
of implementing the data-driven switched controller on the
system. Recall that the switching signal $\sigma_d(t)$ is the
controller's determination of the active mode and the control
$u(t)=K_{\sigma_d(t)}x(t)$ is applied to the system during the
stabilization phase.  The comparison between $\sigma(t)$ and
$\sigma_d(t)$ in Figure~\ref{fig:example}(a) shows that the ``lag'' is
almost always $3$ time instants. During the mismatch between
$\sigma_d(t)$ and $\sigma(t)$, the controller switches to the mode
detection phase, cf. Figure~\ref{fig:example}(b). From this plot, one
can see that Assumption~\ref{ass:detection_after_exercution} holds in
general with parameters $\tau=8$, the average dwell time of
$\sigma(t)$, and $\eta=3/8$. Although the switching is frequent enough
such that the theoretical stability-guaranteeing inequality
\eqref{ADT_AAT_condition} is not met for this system, the resulting
solution trajectory turns out to converge towards the origin,
cf. Figure~\ref{fig:example}(c). We also observe some minor
oscillations of the solution around the origin, which are caused by
the persistent mode detection input during each mode detection phase.

\section{Conclusions}\label{sec:conclusion}
We have addressed the data-driven stabilization of switched linear
systems in scenarios where no knowledge of the system matrices of each
operating mode is available and the switching signal is also unknown.
Instead, we have access to a number of system measurements for each
mode. We have used ideas from the data informativity framework to
develop a controller that alternates between mode detection and
stabilization phases to guarantee an ISS-like property for a wide
range of unknown switching signals. Future work will address the
extension of our results to scenarios where multiple modes are
compatible with the data available in the initialization phase and to
the case of noisy measurements.

\bibliography{../bib/alias,../bib/JC,../bib/Main,../bib/Main-add,../bib/New,../bib/FB}


\bibliographystyle{IEEEtran}

\end{document}